\documentclass[12pt,a4paper]{article}

\usepackage[utf8]{inputenc}
\usepackage[T1]{fontenc}
\usepackage[english]{babel}
\usepackage{amsmath,amssymb,amsthm,amsfonts}
\usepackage{mathrsfs}          
\usepackage{geometry}
\usepackage{hyperref}
\hypersetup{colorlinks=true, linkcolor=black, citecolor=blue, urlcolor=blue}
\usepackage{enumitem}

\usepackage{setspace}
\newtheorem{theorem}{Theorem}[section]

\newtheorem{corollary}[theorem]{Corollary}
\theoremstyle{definition}
\newtheorem{definition}[theorem]{Definition}
\newtheorem{example}[theorem]{Example}
\newtheorem{remark}[theorem]{Remark}
\theoremstyle{remark}

\newcommand{\R}{\mathbb{R}}

\newcommand{\Q}{\mathbb{Q}}

\newcommand{\C}{\mathbb{C}}
\newcommand{\g}{\mathfrak{g}}

\newcommand{\GA}{\mathrm{GA}(\R)}

\newcommand{\F}{\mathcal{F}}

\title{%
  \textbf{On the Classification of Non-Homogeneous Solvable Lie Foliations}
}

\author{%
  Ameth \textsc{Ndiaye}\\[6pt]
  \small Department of Mathematics\\
  \small Faculty of Sciences and Technologies of Education and Training (FASTEF)\\
  \small Cheikh Anta Diop University of Dakar, Senegal\\[4pt]
  \small \texttt{ndiaye\_ameth@yahoo.fr}
}

\date{}

\begin{document}

\maketitle

\begin{abstract}
We study Lie foliations on compact manifolds whose transverse group
is \emph{metabelian} (a natural generalization of the affine group
$\GA$ considered in earlier work).
We establish a complete classification of $\GA$-Lie foliations
in dimension $5$, completing the work initiated in Dathe--Ndiaye.
We then extend this analysis to foliations whose transverse group
is a non-split metabelian Lie group, proving the existence of
non-homogeneous Lie foliations with such groups in the smallest
possible dimension.
We introduce a new obstruction to homogeneity via the group
cohomology $H^{2}(\Gamma,\mathbb{Z})$ of the holonomy group,
and give exotic examples showing that non-polycyclicity of holonomy
is not the only obstruction to homogeneity.

\bigskip
\noindent\textbf{Keywords:}
Lie foliation, metabelian group, homogeneous, holonomy,
polycyclic, compact manifold, classification.

\medskip
\noindent\textbf{MSC 2020:} 57R30, 53C12, 22E25, 20F16.
\end{abstract}

\tableofcontents

\bigskip\hrule\bigskip

\section{Introduction}

In \cite{DN11}, Dathe and Ndiaye constructed non-homogeneous solvable
Lie foliations on compact manifolds of dimensions $5$ and $6$.
These examples, based on the foundational work of Meigniez \cite{Me95},
show that solvable Lie foliations are not all pullbacks of homogeneous
foliations, even in small dimensions.
The main result of \cite{DN11} establishes the existence of a
non-homogeneous $\GA$-Lie foliation on a compact manifold with
boundary of dimension $5$, whose holonomy group $\Gamma'$
is not polycyclic.

The present work continues this program in two complementary directions.
On one hand, we complete the classification of
$\GA$-Lie foliations in dimension $5$: the existence of non-homogeneous
examples was established, but the question of which foliations
\emph{can} be non-homogeneous, and under what precise conditions,
remained open.
We answer this question here by introducing an explicit cohomological
criterion.
On the other hand, we extend the analysis to metabelian
transverse groups (i.e., Lie groups $G$ whose derived group $[G,G]$
is abelian), which constitute the closest natural generalization
of $\GA$.

A recent work of the author \cite{N26} developed a cohomological
framework for rigidity of nilpotent Lie foliations under solvable
deformations.
The present article is complementary: we are concerned not with
infinitesimal rigidity, but with the \emph{global} obstruction
to homogeneity, and we work in the non-nilpotent solvable setting.

Furthermore, Calder\'on, Kleptsyn and Nicolau \cite{CKN24}
recently showed that every affine Lie foliation on a compact
orientable manifold of dimension $3$ or $4$ is rigid under Lie
deformations.
Our Theorem \ref{thm:classif} shows that the situation is more
subtle in dimension $5$, where continuous families of non-equivalent
non-homogeneous foliations appear.

\medskip
The paper is organized as follows. Section \ref{sec:rappels} recalls
the fundamental notions and fixes notation.
Section \ref{sec:classification} establishes the classification of
$\mathrm{GA}(\mathbb{R})$-Lie foliations in dimension $5$.
Section \ref{sec:metabelien} treats foliations with general metabelian
transverse group.
Section \ref{sec:exemples} presents exotic examples and applications
to Riemannian foliations.

\section{Preliminaries and Notation}
\label{sec:rappels}

We recall the notions introduced in \cite{DN11},
enriched with the definitions needed in what follows.
Let $V$ be a compact connected manifold and $G$ a simply connected
Lie group.

\begin{definition}
\label{def:feuill-lie}
A \emph{$G$-Lie foliation} on $V$ is a foliation whose transverse
holonomy pseudogroup is modeled on $G$ by left translations.
The \emph{holonomy group} $h(\pi_{1}(V)) \subset G$,
image of the holonomy morphism $h : \pi_{1}(V) \to G$,
is denoted $\Gamma$.
\end{definition}

\begin{definition}
\label{def:homogene}
A $G$-Lie foliation $\mathcal{F}$ on $V$ is called \emph{homogeneous}
if there exist a Lie group $H \supset G$, a cocompact lattice
$\Lambda \subset H$, and a surjective morphism $\varphi : H \to G$
such that $\mathcal{F}$ is induced on $V = H/\Lambda$ by the kernel
of $\varphi$.
In this case $\Gamma = \varphi(\Lambda)$.
\end{definition}

\begin{theorem}[Fedida {\cite{Fe71, Fe73}}, expanded by Molino {\cite{Mo88}}]
\label{thm:fedida}
Let $\F$ be a $G$-Lie foliation on a compact connected manifold $V$.
Then there exists a submersion
\[
  D : \widetilde{V} \;\longrightarrow\; G,
\]
called the \emph{developing map} of $\F$, and a group morphism
\[
  h : \pi_{1}(V) \;\longrightarrow\; G,
\]
called the \emph{holonomy morphism}, such that:
\begin{enumerate}
  \item for all $x \in \widetilde{V}$ and $\gamma \in \pi_{1}(V)$:
        \[
          D(\gamma \cdot x) \;=\; D(x) \cdot h(\gamma),
        \]
        (equivariance of the developing map);
  \item the leaves of $\widetilde{\F}$ (lifted foliation)
        are exactly the fibers of $D$;
  \item the foliation $\F$ is entirely determined,
        up to isomorphism, by the pair $(D, h)$.
\end{enumerate}
Conversely, every pair $(D, h)$ satisfying these properties defines
a $G$-Lie foliation on $V$.
In particular, the holonomy group $\Gamma = h(\pi_{1}(V)) \subset G$
is a finitely generated subgroup of $G$, and the map
$\F \mapsto \Gamma$ is continuous with respect to the natural topology
on the space of homomorphisms from $\pi_1(V)$ to $G$.
\end{theorem}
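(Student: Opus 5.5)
We construct $(D,h)$ from the local data of $\F$ and the simple connectivity of $G$. By Definition \ref{def:feuill-lie}, $\F$ is given by a cover $\{U_i\}$ of $V$ with submersions $f_i : U_i \to G$ whose fibers are the plaques. On overlaps we have $f_i = L_{g_{ij}} \circ f_j$ for locally constant $g_{ij}\in G$. Since the model acts by left translations, equivariance is written with right multiplication by $h(\gamma)$; equivalently we compose with inversion. We fix this convention at the start.

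\emph{Step 1: the Maurer--Cartan form.} Pull back the left-invariant Maurer--Cartan form $\omega_G$ by each $f_i$. Left-invariance makes $f_i^*\omega_G$ independent of $i$. This gives a global $\g$-valued $1$-form $\omega$ on $V$ satisfying $d\omega + \tfrac12[\omega,\omega]=0$, of constant rank $\dim G$, with $\ker\omega = T\F$. Lift $\omega$ to $\tilde\omega$ on $\widetilde V$. Since $\widetilde V$ is simply connected, the Cartan--Darboux theorem (the fundamental theorem for Lie-algebra-valued forms satisfying Maurer--Cartan) gives a smooth map $D:\widetilde V\to G$ with $D^*\omega_G=\tilde\omega$. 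This $D$ is unique up to left translation. It is a submersion because $\tilde\omega$ has full rank, and its fibers contain the leaves of $\widetilde\F$ because $\ker dD = T\widetilde\F$.

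\emph{Step 2: the holonomy morphism.} For $\gamma\in\pi_1(V)$, the map $D\circ\gamma$ also pulls $\omega_G$ back to $\tilde\omega$, since $\tilde\omega$ is deck-invariant. By the uniqueness in Step 1, $D\circ\gamma$ and $D$ differ by a unique translation. Defining $h(\gamma)$ through this translation gives a map $h$. Uniqueness then forces $h(\gamma\gamma')$ to be $h(\gamma)h(\gamma')$ or its reverse, depending on the side; this is the anti-homomorphism issue, and we fix it by the convention above so that property (1) holds. Since $\pi_1(V)$ is finitely generated for compact $V$, $\Gamma$ is finitely generated.

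\emph{Step 3 (main obstacle): fibers are connected.} We must show each fiber of $D$ is exactly one leaf, that is, that the fibers are connected. This is where compactness enters, through Fedida's argument. Choose a Riemannian metric on $V$ which is bundle-like, built from the pulled-back left-invariant metric on transversals. Lift it to $\widetilde V$; the lift is complete because $V$ is compact. Then $D$ is a Riemannian submersion onto the complete manifold $G$, hence a locally trivial fibration by Ehresmann--Hermann. Its fibers form a fibration over a simply connected base with connected total space $\widetilde V$. The long exact sequence shows $\pi_0(\text{fiber})$ is a quotient of $\pi_1(G)=0$, so each fiber is connected and is therefore a single leaf.

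Property (3) holds because $\F$ is recovered as the projection of the fibers of $D$. The converse is immediate: an equivariant submersion descends to local submersions into $G$, with transition maps given by translations. Finally, continuity of $\F\mapsto\Gamma$ is a soft statement. Under small perturbations, $\omega$ varies continuously, and so does $D$ normalized at a basepoint. Hence each $h(\gamma)$, computed from the basepoint and its $\gamma$-translate, varies continuously. This gives continuity in $\mathrm{Hom}(\pi_1(V),G)$ with the compact-open (pointwise on generators) topology.
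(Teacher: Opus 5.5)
Your argument is correct and is the standard Fedida--Molino proof that the paper cites without reproducing: the Maurer--Cartan form, Darboux integration on $\widetilde V$, completeness from compactness (here via Hermann's theorem for the lifted bundle-like metric) to make $D$ a locally trivial fibration, and $\pi_1(G)=0$ to get connected fibers. One convention slip: composing $D$ with inversion does not by itself give $D(\gamma\cdot x)=D(x)\,h(\gamma)$ with $h$ a homomorphism, since with a left deck action this identity forces $h(\gamma\gamma')=h(\gamma')h(\gamma)$; you should either keep the natural form $D(\gamma\cdot x)=h(\gamma)D(x)$, or, together with $D\mapsto D^{-1}$, let $\pi_1(V)$ act on the right by $x\cdot\gamma=\gamma^{-1}\cdot x$.
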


\begin{remark}
This theorem, essentially due to Fedida and presented in detail in
Molino's book \cite{Mo88}, is the foundation of the entire theory:
it reduces the global study of a Lie foliation to that of two
algebraic objects, the developing map $D$ and the holonomy group
$\Gamma$.
This framework is used systematically throughout this article.
\end{remark}

\begin{definition}
\label{def:metabelien}
A solvable Lie group $G$ is called \emph{metabelian}
if its derived group $[G,G]$ is abelian,
i.e.\ if $G$ has derivation rank at most $2$.
We say $G$ is \emph{split} if there exists an abelian subgroup
$A \subset G$ such that $G = [G,G] \rtimes A$;
otherwise $G$ is called \emph{non-split}.
\end{definition}

The affine group
$$
  \mathrm{GA}(\mathbb{R}) \;=\; \R \rtimes \R_{+}^{*}
  \;=\; \bigl\{\, x \mapsto ax+b \;\big|\; a>0,\; b\in\R \,\bigr\}
$$
is the reference split metabelian group of dimension $2$.
Its Lie algebra is spanned by $\{e_{1}, e_{2}\}$ with
$[e_{1}, e_{2}] = e_{1}$.\\
We now recall the Bianchi classification of solvable Lie groups
of dimension $3$, which will be useful in the proofs of our results.

\begin{theorem}[Bianchi Classification {\cite{Bi1898, EM69}}]
\label{recall:bianchi}
Every simply connected solvable Lie group of dimension $3$
is isomorphic to one of the types in the Bianchi classification.
\end{theorem}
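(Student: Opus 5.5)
The proof reduces the classification of groups to the classification of real Lie algebras, and then classifies three-dimensional solvable Lie algebras directly. A simply connected Lie group is determined up to isomorphism by its Lie algebra (Lie's third theorem together with the correspondence between simply connected groups and Lie algebras). A connected Lie group is solvable exactly when its Lie algebra is. So it is enough to show that every real solvable Lie algebra $\g$ of dimension $3$ is isomorphic to one of the Bianchi algebras of types I--VII, namely the solvable ones.

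\textbf{Step 1: reduce to a semidirect product.} Let $\g$ be solvable of dimension $3$. Then $[\g,\g]\neq\g$, and $[\g,\g]$ is a nilpotent ideal of dimension at most $2$. Choose an ideal $\mathfrak{n}$ of dimension $2$ containing $[\g,\g]$; any subspace containing $[\g,\g]$ is an ideal. If $\dim[\g,\g]\le 1$, the center or a direct check lets us pick such an $\mathfrak{n}$ abelian. If $\dim[\g,\g]=2$, the ideal $[\g,\g]$ is a nilpotent two-dimensional algebra, hence abelian, and we take $\mathfrak{n}=[\g,\g]$. In every case $\mathfrak{n}\cong\R^2$ is abelian. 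Pick $e_3\notin\mathfrak{n}$. Then $\g\cong\R^2\rtimes_{A}\R$, where $A=\mathrm{ad}(e_3)|_{\mathfrak{n}}\in\mathfrak{gl}(2,\R)$.

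\textbf{Step 2: normalize the matrix.} Two such semidirect products are isomorphic when the matrices $A$ agree up to conjugation and nonzero rescaling, since we may replace $e_3$ by $\lambda e_3+n$ with $n\in\mathfrak{n}$. Running through real Jordan forms up to scaling gives the list:
\begin{itemize}
\item $A=0$: the abelian algebra, type I;
\item $A$ nilpotent and nonzero: the Heisenberg algebra, type II;
\item $A=\mathrm{diag}(1,0)$: $\mathfrak{ga}(\R)\oplus\R$, type III;
\item $A$ a single Jordan block with eigenvalue $1$: type IV;
\item $A=\mathrm{Id}$: type V;
\item $A=\mathrm{diag}(1,h)$ with $h\neq 0,1$: types VI$_h$;
\item $A$ with complex eigenvalues $a\pm ib$, $b\neq0$: types VII$_a$, where $a=0$ is the Euclidean algebra $\mathfrak{e}(2)$.
\end{itemize}
In particular the Bianchi types VIII and IX, $\mathfrak{sl}(2,\R)$ and $\mathfrak{su}(2)$, are excluded because they are not solvable.

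\textbf{Main obstacle.} The delicate point is Step 1 when $\dim[\g,\g]\le1$. We need an abelian two-dimensional ideal, and the choice of $e_3$ must not make two different $A$'s represent the same algebra unnoticed. If $[\g,\g]=\R z$ with $z$ central, then $\g$ is the Heisenberg algebra, which is type II with $\mathfrak{n}=\mathrm{span}(z,y)$ for any $y$. If $z$ is not central, then $\g\cong\mathfrak{ga}(\R)\oplus\R$, which is type III. Checking that the items in the list are pairwise non-isomorphic is done by comparing invariants of $A$ up to scaling: the dimension of $[\g,\g]$, whether $A$ is nilpotent, and the ratio of the eigenvalues.
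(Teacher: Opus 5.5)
The paper gives no proof of this statement. It quotes it as a classical result with references to Bianchi and Ellis--MacCallum. Your argument is a correct self-contained proof, and it takes a different route from those references.

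Ellis--MacCallum handle an arbitrary three-dimensional real algebra directly:
\begin{itemize}
\item they write its structure constants as $C^{a}{}_{bc}=\varepsilon_{bcd}\,n^{da}+\delta^{a}_{c}a_{b}-\delta^{a}_{b}a_{c}$, with $n$ symmetric and $n^{ab}a_{b}=0$;
\item they classify by the rank and signature of $n$, by whether $a=0$, and by one continuous parameter in class B.
\end{itemize}
That treats all nine types at once, including the simple types VIII and IX, and it reads off unimodularity directly.

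You instead use solvability from the outset. A two-dimensional abelian ideal exists, so $\g\cong\R^{2}\rtimes_{A}\R$, and everything reduces to normal forms of $A\in\mathfrak{gl}(2,\R)$ up to conjugation and nonzero scaling. This is more elementary. It also produces exactly the semidirect-product presentation the paper uses later (Theorem~\ref{thm:scinde}, Example~\ref{ex:sol}).

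The phrase ``the center or a direct check'' in Step~1 can be replaced by one line. If $[\g,\g]\subseteq\R z$, then $\mathrm{ad}_{z}$ maps $\g$ into $\R z$, so its kernel has dimension at least $2$ and contains $z$. Any plane through $z$ inside this kernel is then an abelian ideal.

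Two remarks on your list.

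First, your list is the correct classical one, and it is strictly larger than the table printed in the paper.
\begin{itemize}
\item The table omits type III. In the paper's own convention for $\mathrm{VI}_h$, type III is $h=0$. The case $h=1$, which the paper identifies with type III, is actually type V.
\item The table also omits $\mathrm{VII}_0=\mathfrak{e}(2)$. The table's $\mathrm{VII}_h$ has $\mathrm{ad}(e_3)$-eigenvalues $-1\pm ih$, which are never purely imaginary.
\end{itemize}
So your proof establishes the classical statement, not the truncated table.

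Second, your closing claim that the items are pairwise non-isomorphic needs the parameters normalized. $\mathrm{diag}(1,h)$ and $\mathrm{diag}(1,h^{-1})$ give isomorphic algebras (swap the basis vectors and rescale $e_3$). Likewise $a$ and $-a$ give isomorphic algebras in $\mathrm{VII}_a$. The eigenvalue ratio is therefore an invariant only up to inversion, and the parameter $a$ only up to sign. This does not affect the statement, which asserts only exhaustiveness.
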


The solvable types are listed below, with their Lie algebras
$\g = \mathrm{span}\{e_1, e_2, e_3\}$ and metabelian character:

\medskip
\begin{center}
\renewcommand{\arraystretch}{1.3}
\begin{tabular}{|c|l|c|c|}
\hline
\textbf{Type} & \textbf{Non-zero brackets} & \textbf{Solvable} & \textbf{Metabelian} \\
\hline
I   & (abelian) $[e_i, e_j] = 0$                          & yes & yes \\
II  & $[e_1, e_2] = e_3$  (Heisenberg)                    & yes (nilpotent) & yes \\
IV  & $[e_1, e_3] = e_1$, $[e_2, e_3] = e_1 + e_2$       & yes & yes \\
V   & $[e_1, e_3] = e_1$, $[e_2, e_3] = e_2$             & yes & yes \\
$\mathrm{VI}_h$ & $[e_1, e_3] = e_1$, $[e_2, e_3] = h\,e_2$, $h \neq 0,1$ & yes & yes \\
$\mathrm{VII}_h$ & $[e_1, e_3] = e_1 - h\,e_2$, $[e_2, e_3] = h\,e_1 + e_2$ & yes & yes \\
\hline
\end{tabular}
\end{center}

\medskip
\noindent
In particular, Bianchi types IV, V, $\mathrm{VI}_{h}$ and
$\mathrm{VII}_{h}$ are metabelian
(Definition \ref{def:metabelien}):
their derived group $[G,G] \cong \mathbb{R}^{2}$ is abelian.
Type $\mathrm{VI}_{-1}$ is unimodular and admits cocompact lattices;
it is used in the proof of Theorem \ref{thm:nonscinde}.
The case $h=1$ of type $\mathrm{VI}$ coincides with type III
(the affine group $\mathrm{GA}(\mathbb{R}) \times \mathbb{R}$) and is
treated separately in Section \ref{sec:classification}.

\begin{definition}
\label{def:unite-alg}
A nonzero complex number $\alpha \in \C^{*}$ is an
\emph{algebraic unit} if it is an algebraic integer over $\mathbb{Z}$
and its inverse $\alpha^{-1}$ is also an algebraic integer,
i.e., if both $\alpha$ and $\alpha^{-1}$ are roots of monic
polynomials with coefficients in $\mathbb{Z}$.

If $G$ is a solvable Lie group and $\gamma \in G$,
the \emph{eigenvalues of $\gamma$} are the (complex) eigenvalues
of the linear operator $\mathrm{Ad}_{\gamma} : \g \to \g$,
where $\g$ is the Lie algebra of $G$.
\end{definition}

\begin{definition}
\label{def:polycyclique}
A group $\Gamma$ is called \emph{polycyclic} if it admits a
composition series
\[
  \{e\} = \Gamma_{0} \;\trianglelefteq\; \Gamma_{1}
          \;\trianglelefteq\; \cdots
          \;\trianglelefteq\; \Gamma_{n} = \Gamma
\]
all of whose successive quotients $\Gamma_{i+1}/\Gamma_{i}$
are cyclic (finite or isomorphic to $\mathbb{Z}$).

Every polycyclic group is solvable and finitely generated.
When $\Gamma$ is a finitely generated solvable group generated by
$\gamma_{1}, \ldots, \gamma_{k}$, it is polycyclic
if and only if all eigenvalues of the generators $\gamma_{i}$
are algebraic units (in the sense of Definition \ref{def:unite-alg}).
\end{definition}

We now recall the two main results of \cite{DN11, Me95}.

\begin{theorem}[Meigniez \cite{Me95}]
\label{recall:polycyclique}
The holonomy groups of homogeneous $G$-Lie foliations on compact
manifolds are \emph{polycyclic}.
In particular, if $\Gamma \subset G$ is the holonomy group
of a homogeneous foliation, then all eigenvalues of elements
of $\Gamma$ (eigenvalues of the adjoint action) are algebraic units.
\end{theorem}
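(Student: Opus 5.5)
The plan is to reduce the statement to a classical structural fact about lattices in solvable Lie groups, and then read off the eigenvalue condition from the criterion recalled in Definition \ref{def:polycyclique}. Let $\F$ be homogeneous. By Definition \ref{def:homogene}, $\F$ is induced on $V=H/\Lambda$ by $\ker\varphi$, with $\Lambda\subset H$ a cocompact lattice, $\varphi:H\to G$ a surjective morphism, and $\Gamma=\varphi(\Lambda)$. Since the class of polycyclic groups is closed under quotients, it suffices to show that $\Lambda$ is polycyclic. The image of a composition series of $\Lambda$ under $\varphi$ is a composition series of $\Gamma$. Its successive quotients are quotients of cyclic groups, hence cyclic.

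\emph{Step 1: reduction to a solvable ambient group.} The image $\varphi(H)=G$ is solvable, so $\ker\varphi$ controls the non-solvable part of $H$. I would first replace $H$ by a suitable solvable group. Take the Levi decomposition $H=S\ltimes R$ with $R$ the radical. Then $\varphi(S)$ is a semisimple subgroup of the solvable group $G$, so $\varphi(S)$ is trivial. Hence $\varphi$ factors through $H/\overline{S}$, or more precisely the compact and semisimple parts are killed. Using Auslander's theorem on lattices in groups with nontrivial Levi factor, $\Lambda\cap R$ projects into $G$ with the same image up to finite index, so $\Gamma$ contains $\varphi(\Lambda\cap R)$ with finite index. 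In the setting of the paper, $H\supset G$ is typically taken solvable anyway, and I would state that reduction explicitly.

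\emph{Step 2: Mostow's theorem.} For $R$ a connected solvable Lie group, every lattice in $R$ is cocompact and polycyclic (Mostow, see Raghunathan, Ch.~IV). The argument runs as follows. The nilradical $N$ meets the lattice in a lattice of $N$; this is Mostow's key lemma and the main technical point. A lattice in a simply connected nilpotent group is finitely generated torsion-free nilpotent, hence polycyclic. The quotient $\Lambda/(\Lambda\cap N)$ embeds as a lattice in the abelian group $R/N$, so it is finitely generated abelian. An extension of polycyclic groups is polycyclic, so $\Lambda$ is polycyclic. Then $\Gamma$ is polycyclic: it is virtually the image of a polycyclic group, and finite extensions are handled by the same closure property.

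\emph{Step 3: eigenvalues.} For the ``in particular'' part I would argue directly rather than via the quoted criterion. The map $\mathrm{Ad}_H(\lambda)$ preserves the $\Q$-structure on $\mathfrak h$ given by the $\mathbb Z$-span of $\log(\Lambda\cap N)$, in the nilpotent part, together with the lattice in $R/N$. So its characteristic polynomial on $\mathfrak h$ has integer coefficients and constant term $\pm1$, since $\lambda^{-1}\in\Lambda$. Hence its eigenvalues are algebraic units. The adjoint action of $\varphi(\lambda)$ on $\g$ is a quotient of $\mathrm{Ad}_H(\lambda)$ on $\mathfrak h$, namely the action on $\mathfrak h/\ker d\varphi$. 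Its eigenvalues therefore form a subset of those of $\mathrm{Ad}_H(\lambda)$, and so are algebraic units.

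The main obstacle is Step 2, specifically showing that $\Lambda\cap N$ is a lattice in $N$. A secondary difficulty is the rationality of the action in Step 3 when $\Lambda\cap N$ does not directly give a $\Q$-form on all of $\mathfrak h$. Here I would pass to a finite-index subgroup and use the Mal'cev rational structure on the algebraic hull of $\Lambda$, where the semisimple parts act by integer matrices.
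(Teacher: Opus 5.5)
The paper gives no proof of this statement; it is quoted from Meigniez, so your argument has to stand on its own. Steps 2 and 3 are a correct proof when $H$ is a simply connected solvable group. However, Definition \ref{def:homogene} does not require $H$ to be solvable, and Step 1, which is meant to reduce to that case, rests on a false claim: $\Gamma$ need not contain $\varphi(\Lambda\cap R)$ with finite index. Here is a counterexample.
Take $H=\mathrm{PSL}(2,\R)\times\R$, whose radical is $R=\R$. Take a cocompact surface group $\Lambda_0\subset\mathrm{PSL}(2,\R)$ and a homomorphism $\chi:\Lambda_0\to\R$ with image $\mathbb{Z}+\sqrt{2}\,\mathbb{Z}$. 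Set $\Lambda=\{(\gamma,\chi(\gamma)+n):\gamma\in\Lambda_0,\ n\in\mathbb{Z}\}$.
This is a cocompact lattice, since the quotient fibres over $\mathrm{PSL}(2,\R)/\Lambda_0$ with circle fibres. We have $\Lambda\cap R=\mathbb{Z}$. For $\varphi$ the projection onto $G=\R$, you get $\Gamma=\mathbb{Z}+\sqrt{2}\,\mathbb{Z}$, in which $\varphi(\Lambda\cap R)$ has infinite index.

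A compact Levi factor is worse. In $\mathrm{SO}(3)\times\R$, the group generated by $(k,1)$, with $k$ of infinite order, is a cocompact lattice meeting $R$ trivially. Auslander's theorem only says that the identity component of $\overline{R\Lambda}$ is solvable. That $\Lambda\cap R$ is a lattice in $R$ needs a no-compact-factor hypothesis. Even then, the image of $\Lambda$ in $H/R$, an infinite lattice in the semisimple part, feeds into $\Gamma$.

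Step 3 also depends on the reduction. In the first example with $\Lambda_0$ non-arithmetic, $\mathrm{Ad}_H(\lambda)$ can have transcendental eigenvalues on $\mathfrak h$. So an integral characteristic polynomial on all of $\mathfrak h$ is special to solvable $H$.

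To close the gap you have two options.
\begin{itemize}
\item Make solvability of $H$ an explicit hypothesis, then pass to the universal cover, where the preimage of $\Lambda$ is again a cocompact lattice. Steps 2--3 then finish the job.
\item Control the image of $\Lambda$ in the solvable quotient rather than $\Lambda\cap R$. For instance, take $H=S\times R$ with $\Lambda_R=\Lambda\cap R$ a lattice in $R$. The $R$-components of elements of $\Lambda$ normalize $\Lambda_R$, and $N_R(\Lambda_R)/\Lambda_R$ is a compact solvable Lie group, being closed in $R/\Lambda_R$. Hence $\Gamma/\varphi(\Lambda_R)$ is finitely generated, solvable and virtually abelian. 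It is therefore polycyclic, though in general infinite, and $\Gamma$ is polycyclic as an extension. Conjugation by those $R$-components preserves the $\mathbb{Z}$-form on the nilradical coming from $\Lambda_R$, which yields the eigenvalue claim.
\end{itemize}

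Two smaller points. First, proving the eigenvalue statement directly, as you do, is necessary rather than optional. The group $\langle x\mapsto 2x\rangle\cong\mathbb{Z}\subset\GA$ is polycyclic but has eigenvalue $2$, so the criterion stated in Definition \ref{def:polycyclique} cannot be invoked. Second, the ``secondary difficulty'' in Step 3 does not arise. Since $[\mathfrak h,\mathfrak h]\subset\mathfrak n$, $\mathrm{Ad}(\lambda)$ acts trivially on $\mathfrak h/\mathfrak n$. Its characteristic polynomial is thus a power of $(x-1)$ times its characteristic polynomial on $\mathfrak n$, where it preserves the lattice spanned by $\log(\Lambda\cap N)$.
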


\begin{theorem}[Dathe--Ndiaye \cite{DN11}]
\label{recall:DN11}
There exists a compact $5$-manifold carrying a
$\mathrm{GA}(\mathbb{R})$-Lie foliation that is not the pullback
of any homogeneous $\mathrm{GA}(\mathbb{R})$-Lie foliation.
\end{theorem}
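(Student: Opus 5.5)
The plan is to follow the construction scheme of Meigniez \cite{Me95} as adapted in \cite{DN11}. I would build a compact $5$-manifold $V$ (possibly with boundary) together with a holonomy morphism $h:\pi_1(V)\to\GA$ whose image $\Gamma$ is not polycyclic. Non-homogeneity then follows from Theorem \ref{recall:polycyclique}.

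The argument runs contrapositively. Suppose $\F$ were the pullback of a homogeneous $\GA$-Lie foliation on some $H/\Lambda$. Then the holonomy group of $\F$ is a subgroup of $\varphi(\Lambda)$, which is polycyclic. Subgroups of polycyclic groups are polycyclic, so $\Gamma$ would be polycyclic. It therefore suffices to produce $\Gamma$ containing an element whose adjoint eigenvalue is not an algebraic unit. Concretely, I take $\Gamma$ generated by $a:x\mapsto 2x$ and $b:x\mapsto x+1$. Since $\mathrm{Ad}_a$ has eigenvalue $2$ on $e_1$, and $1/2$ is not an algebraic integer, $\Gamma\cong\mathbb{Z}[1/2]\rtimes\mathbb{Z}$ (a Baumslag--Solitar group $BS(1,2)$) is not polycyclic. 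This also follows directly, since $\mathbb{Z}[1/2]$ is not finitely generated.

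The construction proceeds in three steps.

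\begin{enumerate}
\item Realize $\Gamma$ as a quotient of a $3$-manifold group, for instance of $\pi_1$ of a compact $3$-manifold $M$ with boundary. I would take a handlebody, or the complement of a suitable link, with fundamental group free on two generators. I then send the generators to $a$ and $b$, which gives a surjection $h:\pi_1(M)\to\Gamma$.
\item Construct a developing map $D:\widetilde{M\times T^2}\to\GA$ that is an $h$-equivariant submersion. Here $V=M\times T^2$ is $5$-dimensional, and $h$ is extended trivially on the torus factor. Following \cite{Me95}, one gets $D$ by first choosing any equivariant map, which exists because $\GA$ is contractible. One then perturbs it to a submersion using Gromov--Phillips $h$-principle type arguments on an open manifold. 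This works because $M$ has nonempty boundary, or is open, and the tangent bundle admits the required rank-$2$ epimorphism using the extra $T^2$ directions.
\item By Theorem \ref{thm:fedida}, the pair $(D,h)$ defines a $\GA$-Lie foliation on $V$ with holonomy group $\Gamma$.
\end{enumerate}

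The main obstacle is Step 2: producing an honest equivariant submersion, not merely a map, in exactly dimension $5$. The dimension count must leave enough room for the $h$-principle. That requires codimension $2$, a leaf dimension of $3$, and an open or bounded base.

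If compactness without boundary is demanded, I would instead try a suspension. I would take the mapping torus of a diffeomorphism of a $4$-manifold with boundary whose monodromy realizes the $a$-action, and then cap off boundaries compatibly. I expect this capping step to be where the difficulty concentrates.
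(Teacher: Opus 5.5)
Your proof is correct for the reading the paper gives this statement: its introduction places the example of \cite{DN11} on a compact $5$-manifold \emph{with boundary}. Your non-homogeneity argument is the same as the paper's. A pullback of a homogeneous foliation has holonomy inside a conjugate of the polycyclic group $\varphi(\Lambda)$ (Theorem \ref{recall:polycyclique}), subgroups of polycyclic groups are polycyclic, and $\langle x\mapsto 2x,\ x\mapsto x+1\rangle\cong\mathbb{Z}[1/2]\rtimes\mathbb{Z}$ is not polycyclic.

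The existence part is where you take a genuinely different route. The construction the paper attributes to \cite{DN11}, following Meigniez \cite{Me95}, keeps the example close to a homogeneous one. It starts from a polycyclic subgroup of $\Gamma'=\langle(\lambda,0),(1,1),(\varepsilon,0)\rangle$ ($\lambda$ a quadratic unit) that is the holonomy of a homogeneous foliation, and adjoins the non-unit generator $(\varepsilon,0)$. It then obtains the support manifold by Meigniez's technique (complement of a tubular neighbourhood), using that $\Gamma'$ contains a uniform polycyclic subgroup.

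You instead realize an arbitrary homomorphism of a free group on $M\times T^{2}$ and get the equivariant submersion from the $h$-principle on open manifolds. The formal data are a section of the flat bundle with contractible fibre $\GA$, and an epimorphism from $TV$ onto the vertical bundle. That bundle is trivial, since left-invariant fields are preserved by the left translations defining the flat structure. Your route is softer and more general: every finitely generated subgroup of $\GA$ arises this way, in every dimension $\ge 3$, so the number $5$ plays no role once boundary is allowed. The $h$-principle can even be replaced by a hands-on construction. A disc $D_{0}\subset\GA$ with two immersed bands joining it to its translates by $a$ and $b$ gives a $\GA$-structure on a compact surface with free $\pi_{1}$ of rank $2$; multiplying by $[0,1]\times T^{2}$ gives the foliation.

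What your route cannot touch is the closed case, and your proposed fallback there fails for this $\Gamma$, not merely at a difficult step. On a closed manifold, $V$ would fibre over $\GA/\overline{\Gamma}\cong\R_{+}^{*}/2^{\mathbb{Z}}\cong S^{1}$, with the leaf closures as closed fibres. The fundamental group of a fibre would then map onto the translation subgroup $\mathbb{Z}[1/2]$ of $\Gamma$, which is not finitely generated. The \cite{DN11} route, which grafts the non-unit generator onto a polycyclic subgroup coming from a homogeneous foliation, is the one that connects with Meigniez's realization theory.
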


The holonomy group of this foliation is
$$
  \Gamma' \;=\;
  \bigl\langle (\lambda,0);\,(1,1);\,(\varepsilon,0) \bigr\rangle
  \;\subset\; \mathrm{GA}(\mathbb{R}),
$$
where $\lambda$ is a degree-$2$ algebraic unit
and $\varepsilon \in \mathbb{R}$ is not an algebraic unit.

\section{Classification of $\mathrm{GA}(\mathbb{R})$-Foliations
  in Dimension $5$}
\label{sec:classification}

In this section we establish a complete classification of
$\mathrm{GA}(\mathbb{R})$-Lie foliations on compact $5$-manifolds.

\medskip
Recall that every finitely generated discrete subgroup
$\Gamma \subset \mathrm{GA}(\mathbb{R})$ can be described using
canonical generators: a dilation $\tau : x \mapsto \lambda x$
($\lambda > 1$) and translations $\sigma_{c} : x \mapsto x + c$
($c \in \mathbb{R}$).
The following theorem classifies the holonomy groups in dimension $5$.

\begin{theorem}
\label{thm:classif}
Let $\mathcal{F}$ be a $\mathrm{GA}(\mathbb{R})$-Lie foliation on a compact
$5$-manifold.
Its holonomy group $\Gamma$ is of exactly one of the following types:
\begin{enumerate}[label=\textup{(\Roman*)},
                  ref=\textup{(\Roman*)}]
  \item\label{type:I}
        $\Gamma \cong \mathbb{Z}$ \textup{(}infinite cyclic abelian group;
        foliation reducible to an $\mathbb{R}$-foliation,
        hence homogeneous\textup{)}.
  \item\label{type:II}
        $\Gamma \cong \mathbb{Z}^{2}$; homogeneous foliation of
        affine torus type.
  \item\label{type:IIIa}
        $\Gamma = \langle \tau, \sigma \rangle$ with
        $\tau : x \mapsto \lambda x$,
        $\sigma : x \mapsto x + c$,
        $\lambda$ a degree-$2$ algebraic unit,
        $c = (\lambda-1)^{-1}$; \textbf{homogeneous} foliation
        ($\Gamma$ polycyclic).
  \item\label{type:IIIb}
        $\Gamma = \langle \tau, \sigma, \rho \rangle$ with
        $\tau : x \mapsto \lambda x$,
        $\sigma : x \mapsto x+1$,
        $\rho : x \mapsto x+\varepsilon$,
        $\varepsilon \neq 0$ not an algebraic unit;
        \textbf{non-homogeneous} foliation
        ($\Gamma$ not polycyclic).
  \item\label{type:IIIc}
        $\Gamma = \langle \tau, \sigma, \rho \rangle$ with
        $\tau : x \mapsto \lambda x$,
        $\sigma : x \mapsto x+c_{1}$,
        $\rho : x \mapsto x+c_{2}$,
        $c_{1}, c_{2} \neq 0$ and $\frac{c_1}{c_2} \notin \Q$
        \textup{(}i.e.\ $c_{1}$ and $c_{2}$ are
        $\Q$-linearly independent\textup{)},
        $\lambda$ an algebraic unit;
        \textbf{non-homogeneous} foliation
        ($\Gamma$ not isomorphic to the preceding types).
\end{enumerate}
Types \ref{type:I}, \ref{type:II}, \ref{type:IIIa} are exactly
the homogeneous $\mathrm{GA}(\mathbb{R})$-Lie foliations in dimension $5$.
Types \ref{type:IIIb} and \ref{type:IIIc} are exactly
the non-homogeneous foliations.
\end{theorem}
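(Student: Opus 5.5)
I will follow the structure given by Theorem \ref{thm:fedida}: $\F$ is determined by the pair $(D,h)$, so the classification reduces to classifying the possible holonomy groups $\Gamma=h(\pi_1(V))\subset \GA$ for $\dim V=5$, and then deciding homogeneity for each. For homogeneity I will use Theorem \ref{recall:polycyclique} (homogeneous implies $\Gamma$ polycyclic) together with the eigenvalue criterion of Definition \ref{def:polycyclique}.

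\textbf{Step 1: normal form for $\Gamma$.} Let $p:\GA\to\R_+^*$ be the projection $x\mapsto ax+b \mapsto a$, with kernel the translation subgroup $T\cong\R$. Then $\Gamma$ is finitely generated and sits in the extension $1\to\Gamma\cap T\to\Gamma\to p(\Gamma)\to 1$, where $p(\Gamma)$ is a finitely generated subgroup of $\R_+^*$.

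If $p(\Gamma)$ is trivial, then $\Gamma\subset T$ is a finitely generated subgroup of $\R$, hence $\cong\mathbb{Z}^k$. If $p(\Gamma)$ is nontrivial, I will use the dimension constraint. The codimension is $2$, so the leaves have dimension $3$. By Fedida the closures of the leaves are fibers of a locally trivial fibration, and the closure of $\Gamma$ has dimension at most $2$. Together with the Euler characteristic and rank constraints on $\pi_1$ of a compact $5$-manifold, I want to argue that $p(\Gamma)$ is cyclic, generated by a single dilation $\tau:x\mapsto\lambda x$ after conjugating by a translation. I also want to show that $\Gamma\cap T$, as a $\mathbb{Z}[\lambda^{\pm1}]$-module, has at most two generators $c_1,c_2$. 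Normalising $c_1=1$ when only one orbit is needed gives the list \ref{type:I}--\ref{type:IIIc}. The cases are distinguished by:
\begin{itemize}
\item the rank of $\Gamma$;
\item whether $\lambda$ is an algebraic unit;
\item the $\Q$-dependence of the translation parts.
\end{itemize}
The condition $c=(\lambda-1)^{-1}$ in \ref{type:IIIa} comes from conjugating so that the relation $\tau\sigma\tau^{-1}=\sigma^{\lambda}$ is realised by an integral matrix (a companion matrix of $\lambda$).

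\textbf{Step 2: homogeneity.}
\begin{itemize}
\item In types \ref{type:I} and \ref{type:II}, $\Gamma$ is abelian. I will build $H=\R\times N$ or an affine torus model $H=\GA\times\R^3$ with a lattice $\Lambda$ mapping onto $\Gamma$, giving homogeneity directly.
\item In type \ref{type:IIIa}, $\lambda$ is a degree-$2$ unit, so $\mathbb{Z}[\lambda]\cong\mathbb{Z}^2$ is invariant under multiplication by $\lambda$. Take the Sol-type group $H=\R^2\rtimes\R$ (Bianchi $\mathrm{VI}_{-1}$) times a factor $\R^2$ to reach dimension $5$. Its lattice $\mathbb{Z}^2\rtimes\mathbb{Z}$, times $\mathbb{Z}^2$, maps onto $\Gamma$ via the embedding $\mathbb{Z}^2\to\R$, $(m,n)\mapsto (m+n\lambda)c$.
\item In type \ref{type:IIIb}, $\Gamma$ contains the translation $\rho$ by $\varepsilon$. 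The adjoint of $\tau$ has eigenvalue $\lambda$, and the translation module is not finitely generated as an abelian group. So $\Gamma$ is not polycyclic, and Theorem \ref{recall:polycyclique} forbids homogeneity. Existence is Theorem \ref{recall:DN11}.
\end{itemize}

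\textbf{Step 3: type \ref{type:IIIc}, the main obstacle.} This step is hardest, because $\Gamma$ may well be polycyclic there: $\lambda$ is a unit, and the translation part may be a finitely generated $\mathbb{Z}[\lambda^{\pm1}]$-module. So Meigniez's criterion does not apply. I would first try to show that a homogeneous realisation forces $\Gamma\cap T=\varphi(\Lambda\cap\ker p\circ\varphi)$ to be a single $\lambda$-stable lattice of rank equal to $\deg\lambda$. In the $5$-dimensional setting this is rank $2$, because $\dim H=5$ and the kernel of $\varphi$ has dimension $3$. The two $\Q$-independent classes $c_1,c_2$, not related by $\mathbb{Z}[\lambda]$, would then exceed this rank.

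Failing that, I would use the obstruction announced in the introduction: compare the class in $H^2(\Gamma,\mathbb{Z})$ of the extension $1\to\Lambda\cap\ker\varphi\to\Lambda\to\Gamma\to1$ with what a compact $5$-dimensional solvmanifold allows. Finally I would check that the types are mutually exclusive, via the isomorphism invariants rank, polycyclicity and $\Q$-rank of the translations, and that the list is exhaustive by Step 1.
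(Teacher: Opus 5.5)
Your proposal has genuine gaps, and they cannot be filled, because the statement is false as written. The steps you leave at the level of ``I want to argue'' are exactly where it breaks. In Step 1 there is no rank constraint on $\pi_1$ of a closed $5$-manifold: every finitely presented group occurs. Moreover, $p(\Gamma)$ need not be cyclic, even for homogeneous foliations. Let $K$ be a totally real cubic field with totally positive units $U^{+}\cong\mathbb{Z}^{2}$. Let $H=\R^{3}\rtimes\R^{2}$, with $y$ in the trace-zero plane acting by $\mathrm{diag}(e^{y_1},e^{y_2},e^{y_3})$, and embed $\Lambda\cong\mathcal{O}_K\rtimes U^{+}$ as a cocompact lattice via the three real embeddings and their logarithms. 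Then $\varphi(v,y)=(x\mapsto e^{y_1}x+v_1)$ is a surjection $H\to\GA$ with $3$-dimensional kernel. So $H/\Lambda$ is a compact $5$-manifold with a homogeneous $\GA$-foliation whose holonomy has $p(\Gamma)\cong\mathbb{Z}^{2}$. It is of none of the five types; it is not even abelian-by-cyclic. At the other end, types I and II never occur. The developing map induces a surjection $V\to\GA/\overline{\Gamma}$, so this quotient is compact. But an abelian $\Gamma$ lies in $T$ or in a conjugate of the dilation subgroup, and both have non-compact quotient. So your homogeneous models for I and II cannot exist; $\GA\times\R^{3}$ is not unimodular and has no lattice at all.

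The homogeneity steps fail as well. In type IIIb the translation module is finitely generated whenever $\lambda$ is an algebraic unit: $\mathbb{Z}[\lambda^{\pm1}]+\mathbb{Z}[\lambda^{\pm1}]\varepsilon$ has rank at most $2\deg\lambda$, whatever $\varepsilon$ is. Polycyclicity is governed by the eigenvalues of $\mathrm{Ad}_{\tau}$, not by translation lengths. In the notation $(a,b)\leftrightarrow x\mapsto ax+b$ used for $\Gamma'$, the generator $(\varepsilon,0)$ is the dilation $x\mapsto\varepsilon x$, not a translation.

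In type IIIc, $\dim\ker\varphi=3$ does not bound the rank of $\Gamma\cap T$ by $2$. Take $\lambda=(3+\sqrt5)/2$. Then $\lambda-1=(1+\sqrt5)/2$ is a unit, and $\mathbb{Z}[\lambda^{\pm1}]=\mathbb{Z}[\lambda]$ is the ring of integers of $\mathbb{Q}(\sqrt5)$. With $\sigma:x\mapsto x+1$, the single subgroup $\langle\tau,\sigma\rangle$ is simultaneously:
\begin{itemize}
\item of type IIIa, because $(\lambda-1)^{-1}\mathbb{Z}[\lambda]=\mathbb{Z}[\lambda]$;
\item of type IIIb, taking $\varepsilon=2$;
\item of type IIIc, taking $c_2=\lambda$.
\end{itemize}
It is the holonomy of the homogeneous foliation on $(\mathrm{Sol}\times\R^{2})/(\Lambda_0\times\mathbb{Z}^{2})$, where $\Lambda_0\cong\mathbb{Z}[\lambda]\rtimes\mathbb{Z}$.

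Even $\rho:x\mapsto x+\sqrt2$ gives a homogeneous example. Here $M=\mathbb{Z}[\lambda]+\mathbb{Z}[\lambda]\sqrt2\cong\mathbb{Z}^{4}$, and $\tau$ acts on it by some $B=e^{A}\in\mathrm{SL}(4,\mathbb{Z})$ with eigenvalues $\lambda,\lambda,\lambda^{-1},\lambda^{-1}$. Let $L$ be the $\R$-linear extension of $\mathbb{Z}^{4}\cong M\subset\R$. Then $(v,t)\mapsto(x\mapsto\lambda^{t}x+L(v))$ maps the lattice $\mathbb{Z}^{4}\rtimes\mathbb{Z}$ of the $5$-dimensional group $\R^{4}\rtimes_{A}\R$ onto $\langle\tau,\sigma,\rho\rangle$. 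This is a rank-$4$ translation part in dimension $5$, so no $H^{2}$ argument can rescue Step 3.

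The paper's own proof follows the same skeleton as yours and fails at the same points. Its exhaustiveness rests on a nonexistent rank bound for $\pi_1(V)$. Its type IIIb argument treats $\varepsilon$ as a translation. Its type IIIc argument appeals to a cocompact lattice in $\GA$, which does not exist.
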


\begin{proof}
The classification rests on the structure of finitely generated
discrete subgroups of $\mathrm{GA}(\mathbb{R}) \cong \mathbb{R} \rtimes \mathbb{R}_{+}^{*}$.

\medskip
\noindent\textit{Abelian cases (Types I and II).}
If $\Gamma$ is contained in the translation subgroup
$\mathbb{R} \subset \mathrm{GA}(\mathbb{R})$, it is isomorphic to $\mathbb{Z}$ (Type I)
or $\mathbb{Z}^{2}$ (Type II) according to its rank.
In both cases $\Gamma$ is nilpotent, and Haefliger's theorem
ensures that the foliation is the pullback of a homogeneous
foliation (cf.\ \cite{Gh88}).

\medskip
\noindent\textit{Case with dilation.}
Suppose $\Gamma$ contains a dilating element
$\tau : x \mapsto \lambda x$ ($\lambda > 1$).
The centralizer of $\tau$ in $\mathrm{GA}(\mathbb{R})$ is
$\{ x \mapsto \lambda^{n} x + c : n \in \mathbb{Z},\; c \in \mathbb{R} \}$.
If $\sigma : x \mapsto x + c$ is a translation in $\Gamma$,
then $\tau \sigma \tau^{-1} : x \mapsto x + \lambda c$,
so $\Gamma$ is generated by $\tau$ and elements of the form
$x \mapsto x + c_{i}$.

\medskip
\noindent\textit{Type IIIa: polycyclic case.}
If $\lambda$ is a degree-$2$ algebraic unit and
$c = (\lambda - 1)^{-1}$, then
$\Gamma = \langle \tau, \sigma \rangle \cong \mathbb{Z} \ltimes \mathbb{Z}$
is polycyclic (all its eigenvalues are algebraic units).
By Theorem \ref{recall:polycyclique} and the
Matsumoto--Tsuchiya theorem \cite{Ma92}, $\mathcal{F}$ is the
pullback of a homogeneous foliation.

\medskip
\noindent\textit{Type IIIb: construction of {\cite{DN11}}.}
This is the content of Theorem \ref{recall:DN11}: $\Gamma$ contains
the uniform polycyclic subgroup
$\Gamma_{0} = \langle \tau,\; \sigma_{(\lambda-1)(\varepsilon-1)} \rangle$,
hence is the holonomy group of a Lie foliation
(Meigniez \cite{Me95}).
But since $\varepsilon$ is not an algebraic unit,
$\Gamma$ itself is not polycyclic,
so the foliation is not the pullback of any homogeneous foliation.

\medskip
\noindent\textit{Type IIIc: new type.}
Set $\tau : x \mapsto \lambda x$ (algebraic unit),
$\sigma : x \mapsto x + c_{1}$,
$\rho : x \mapsto x + c_{2}$ with $\frac{c_1}{c_2} \notin \Q$.
Then $\Gamma_{0} = \langle \tau, \sigma \rangle$ is polycyclic
and uniform in $\Gamma$; by \cite{Me95}, $\Gamma$ is the holonomy
group of a Lie foliation.
This foliation is not the pullback of a homogeneous foliation:
if it were, the holonomy lattice would be cocompact in
$\mathrm{GA}(\mathbb{R})$, which would force the ratio $\frac{c_1}{c_2}$
to be rational, contradicting the hypothesis.

\medskip
\noindent\textit{Exhaustiveness.}
In dimension $5$, the fundamental group $\pi_{1}(V)$ is finitely
generated of rank $\leq 5$.
The image $\Gamma = h(\pi_{1}(V))$ is a finitely generated subgroup
of $\mathrm{GA}(\mathbb{R})$ of rank $\leq 5$.
The structural analysis of finitely generated discrete subgroups
of $\mathrm{GA}(\mathbb{R})$ (Malcev's theorem on finitely generated solvable
groups, see \cite{Mo88}) shows that the five types above are the
only ones.
\end{proof}

\begin{corollary}
\label{cor:famille}
Let $\mathcal{A}$ be the set of algebraic units. There exists a continuous family, parametrized by
$\varepsilon \in \mathbb{R} \setminus \mathcal{A}$,
of pairwise non-isomorphic non-homogeneous $\mathrm{GA}(\mathbb{R})$-Lie
foliations on compact $5$-manifolds.
\end{corollary}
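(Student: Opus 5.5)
The plan is to build the family directly from the Type IIIb construction of Theorem \ref{thm:classif}, i.e.\ the Dathe--Ndiaye construction of Theorem \ref{recall:DN11}. Fix once and for all a degree-$2$ algebraic unit $\lambda>1$. For each $\varepsilon \in \mathbb{R}\setminus\mathcal{A}$ (necessarily $\varepsilon\neq 0$) set
\[
  \Gamma_\varepsilon \;=\; \bigl\langle (\lambda,0);\,(1,1);\,(\varepsilon,0)\bigr\rangle \subset \GA .
\]
The first step is to construct $\mathcal{F}_\varepsilon$. We take the uniform polycyclic subgroup $\Gamma_0$ exhibited in the Type IIIb part of the proof, realize it as the holonomy of a homogeneous foliation on a compact $5$-manifold, and then perform Meigniez's enlargement \cite{Me95} to obtain a $\GA$-Lie foliation $\mathcal{F}_\varepsilon$ on a compact $5$-manifold $V_\varepsilon$ with holonomy group $\Gamma_\varepsilon$.

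Continuity in $\varepsilon$ comes from Theorem \ref{thm:fedida}. The surgery in \cite{DN11} is performed on a fixed underlying manifold, and only the holonomy representation $h_\varepsilon:\pi_1(V)\to\GA$ varies. On the fixed generators, $h_\varepsilon$ depends continuously on $\varepsilon$. We would check that the developing map can also be chosen to depend continuously on $\varepsilon$; in the gluing construction it is a product with the local model of the added generator. Then $\varepsilon\mapsto\mathcal{F}_\varepsilon$ is a continuous family in the sense of that theorem. Non-homogeneity of each member is immediate from Type IIIb: $\varepsilon\notin\mathcal{A}$, so $\Gamma_\varepsilon$ is not polycyclic, and Theorem \ref{recall:polycyclique} rules out homogeneity.

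The main obstacle is pairwise non-isomorphism. An isomorphism of $\GA$-Lie foliations conjugates the holonomy groups within $\GA$, possibly composed with an automorphism of $\GA$. So I would identify a conjugacy invariant of $\Gamma_\varepsilon$ that recovers $\varepsilon$. Conjugating by $x\mapsto ax+b$ preserves the linear parts, so the multiplicative group of linear parts $\langle\lambda,\varepsilon\rangle\subset\R_+^*$ is an invariant, and in fact an invariant up to the automorphisms of $\GA$, which fix the linear part. If $\varepsilon$ is not of the form $\pm\lambda^{n}\varepsilon'^{\pm1}$, then $\langle\lambda,\varepsilon\rangle\neq\langle\lambda,\varepsilon'\rangle$.

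The invariant therefore only determines $\varepsilon$ up to the countable equivalence $\varepsilon\sim\lambda^n\varepsilon^{\pm1}$. To get a genuinely pairwise non-isomorphic family, I would restrict the parameter to a fundamental domain. One choice is to take $\varepsilon\in(1,\lambda)$ with $\varepsilon\notin\mathcal{A}$, which still contains a continuum; then identify that set with $\mathbb{R}\setminus\mathcal{A}$ up to this reparametrization. If the statement is read literally on all of $\mathbb{R}\setminus\mathcal{A}$, this is exactly the step where the argument may fail. There I would instead use the translation module $\mathbb{Z}[\lambda^{\pm1},\varepsilon^{\pm1}]$ together with its scale, which is fixed by the normalized translation $(1,1)$, as a finer invariant.
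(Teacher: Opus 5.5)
You follow the paper's route: the Dathe--Ndiaye family $\Gamma_\varepsilon=\langle(\lambda,0),(1,1),(\varepsilon,0)\rangle$, non-homogeneity from non-polycyclicity, and separation of members by the conjugacy class of the holonomy group. The step that fails is exactly the one you flagged, but neither of your repairs closes it.

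Read $(\varepsilon,0)$ as the dilation $x\mapsto\varepsilon x$, which is the reading your invariant uses. Then $(\lambda\varepsilon,0)=(\lambda,0)(\varepsilon,0)$ and $(\varepsilon^{-1},0)=(\varepsilon,0)^{-1}$, so $\Gamma_{\lambda^{n}\varepsilon^{\pm1}}=\Gamma_\varepsilon$ \emph{as subgroups}. Hence no invariant extracted from the holonomy group can separate these parameters. This includes your ``finer'' invariant, the translation module $\mathbb{Z}[\lambda^{\pm1},\varepsilon^{\pm1}]$ with its scale, so your fallback for the literal statement does not work.

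Your fundamental domain is also wrong. The interval $(1,\lambda)$ only absorbs $\varepsilon\mapsto\lambda\varepsilon$: for $\varepsilon\in(1,\lambda)$ one also has $\lambda/\varepsilon\in(1,\lambda)$, and $\Gamma_{\lambda/\varepsilon}=\Gamma_\varepsilon$. In the coordinate $t=\log\varepsilon$ the identifications are $t\mapsto\pm t+n\log\lambda$, so the correct domain is $(1,\sqrt{\lambda})$.

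Finally, $0\in\mathbb{R}\setminus\mathcal{A}$ because units lie in $\mathbb{C}^{*}$, so your remark ``necessarily $\varepsilon\neq0$'' is false. Moreover, for $\varepsilon\le 0$ the element $(\varepsilon,0)$ is not in $\mathrm{GA}(\mathbb{R})$ at all.

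Once corrected, your argument does prove the following. Since $\varepsilon\notin\mathcal{A}$ forces $\lambda$ and $\varepsilon$ to be multiplicatively independent, $\langle\lambda,\varepsilon\rangle=\langle\lambda,\varepsilon'\rangle$ holds iff $\varepsilon'=\lambda^{n}\varepsilon^{\pm1}$. So parameters in $(1,\sqrt{\lambda})\setminus\mathcal{A}$ give pairwise non-conjugate holonomy groups, hence uncountably many isomorphism classes of $\mathrm{GA}(\mathbb{R})$-Lie foliations.

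Getting the statement over all of $\mathbb{R}\setminus\mathcal{A}$, with $\varepsilon$ as the holonomy parameter, would need an invariant of $\mathcal{F}_\varepsilon$ finer than $\Gamma_\varepsilon$. For example, you would have to show that no diffeomorphism of $V$ carries $h_\varepsilon$ to $h_{\lambda\varepsilon}$, and you have no such argument. The alternative is to reindex $(1,\sqrt{\lambda})\setminus\mathcal{A}$ by $\mathbb{R}\setminus\mathcal{A}$. That needs an actual homeomorphism (one can be built by a back-and-forth argument, since real units are dense), not just an assertion.

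The paper's proof does not fill this hole either. Its criterion ``conjugate iff $\varepsilon/\varepsilon'\in\mathbb{Q}(\lambda)$'' fails in both directions for transcendental $\varepsilon$. On one side, $\Gamma_{1/\varepsilon}=\Gamma_\varepsilon$ although $\varepsilon^{2}\notin\mathbb{Q}(\lambda)$. On the other, $\Gamma_{2\varepsilon}$ and $\Gamma_\varepsilon$ have different groups of linear parts, so they are not conjugate, although $\varepsilon/(2\varepsilon)\in\mathbb{Q}(\lambda)$. Taken at face value, the criterion would identify $\mathcal{F}_\varepsilon$ with $\mathcal{F}_{2\varepsilon}$, contradicting pairwise non-isomorphism.

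Two smaller points:
\begin{itemize}
\item Justify non-polycyclicity directly under the dilation reading: the translation subgroup contains $\mathbb{Z}[\varepsilon^{\pm1}]$, which is not finitely generated. Do not cite Type~IIIb for this, since there $\rho$ is written as $x\mapsto x+\varepsilon$, and with $\lambda$ a unit that group would in fact be polycyclic.
\item Your continuity claim is stated as something you ``would check'', not proved.
\end{itemize}
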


\begin{proof}
For each non-algebraic-unit $\varepsilon$, the construction of
\cite{DN11} gives a foliation $\mathcal{F}_{\varepsilon}$ with
holonomy group
$\Gamma_{\varepsilon} = \langle (\lambda,0);\,(1,1);\,(\varepsilon,0)\rangle$.
Two foliations $\mathcal{F}_{\varepsilon}$ and $\mathcal{F}_{\varepsilon'}$
are isomorphic if and only if their holonomy groups are conjugate
in $\mathrm{GA}(\mathbb{R})$.
Now $\Gamma_{\varepsilon}$ and $\Gamma_{\varepsilon'}$ are conjugate
if and only if $\frac{\varepsilon}{\varepsilon' }\in \Q(\lambda)$.
Since $\varepsilon \notin \Q(\lambda)$ generically,
the family is uncountable and the foliations are pairwise
non-isomorphic.
\end{proof}

A cohomological obstruction to homogeneity is given by the following
result.

\begin{theorem}
\label{thm:obstruction}
Let $\mathcal{F}$ be a $\mathrm{GA}(\mathbb{R})$-Lie foliation on a compact
$5$-manifold $V$ with holonomy group $\Gamma$.
Then $\mathcal{F}$ is homogeneous if and only if the following two
conditions are satisfied:
\begin{enumerate}[label=\textup{(\roman*)}]
  \item $\Gamma$ is polycyclic;
  \item the cohomology class
        $[\mathcal{F}] \in H^{2}(\Gamma,\mathbb{Z})$
        associated to the extension
        $1 \to [\Gamma,\Gamma] \to \Gamma \to \Gamma/[\Gamma,\Gamma]
        \to 1$ is trivial.
\end{enumerate}
\end{theorem}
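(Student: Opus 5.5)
Necessity and sufficiency will be proved separately, relying on the classification in Theorem \ref{thm:classif} and on Theorem \ref{recall:polycyclique}. Throughout, the class $[\mathcal{F}]$ is read as the class of the abelianization extension $1\to[\Gamma,\Gamma]\to\Gamma\to\Gamma^{ab}\to 1$. Strictly, this class lives in $H^{2}(\Gamma^{ab},[\Gamma,\Gamma])$ with the induced action. To have it in $H^{2}(\Gamma,\mathbb{Z})$, I will pull it back to $\Gamma$ after fixing an identification $[\Gamma,\Gamma]\otimes\Q\cong\Q$ along a generator. This identification is available in the polycyclic types: in type \ref{type:IIIa} the commutator subgroup is the translation lattice $\mathbb{Z}[\lambda]c$. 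Fixing this interpretation precisely is the first step, since the statement depends on it.

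\emph{Necessity.} Suppose $\mathcal{F}$ is homogeneous. Condition (i) follows directly from Theorem \ref{recall:polycyclique}. For (ii), Definition \ref{def:homogene} gives $\Gamma=\varphi(\Lambda)$ with $\Lambda$ a cocompact lattice in a simply connected solvable group $H$. By Mostow's structure theory, $\Lambda$ is polycyclic and has a nilradical $N_\Lambda$ with $\Lambda/N_\Lambda$ free abelian. The map $\varphi$ carries $N_\Lambda$ into the translation subgroup $\R\subset\GA$. Hence $\Gamma$ is a semidirect product: translations by a $\mathbb{Z}[\lambda^{\pm1}]$-module, extended by the free abelian group generated by dilations. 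Free abelian quotients split, so the extension $\Gamma\to\Gamma/(\Gamma\cap\R)$ has a section. I will then compare this splitting with the commutator extension. In type \ref{type:IIIa}, $\Gamma^{ab}\cong\mathbb{Z}\oplus\mathbb{Z}[\lambda]/(\lambda-1)$, and the translation part modulo commutators is finite. The image of the class in $H^{2}(\Gamma,\mathbb{Z})$ therefore dies after the identification above, because $H^2$ of a free abelian group with coefficients in a torsion quotient contributes nothing in the rational classification. This gives $[\mathcal{F}]=0$.

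\emph{Sufficiency.} Suppose (i) and (ii) hold. Condition (i) excludes type \ref{type:IIIb}, whose $\varepsilon$ gives a non-unit eigenvalue. It also excludes type \ref{type:IIIc}: the criterion after Definition \ref{def:polycyclique} shows that a translation rank exceeding the degree of $\lambda$ prevents polycyclicity. I will argue this carefully via Hirsch length, since for polycyclic $\Gamma$ the translation part must be a finitely generated abelian group invariant under $\lambda$. Hence $\Gamma$ is of type \ref{type:I}, \ref{type:II} or \ref{type:IIIa}, all of which Theorem \ref{thm:classif} declares homogeneous. In fact (ii) is used only to pin down the splitting $\Gamma\cong\mathbb{Z}^2\rtimes_A\mathbb{Z}$ with $A$ hyperbolic of trace $\lambda+\lambda^{-1}$. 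I will then build $H=\R^2\rtimes\R$ (Bianchi type $\mathrm{VI}_{-1}$) with lattice $\Lambda=\mathbb{Z}^2\rtimes_A\mathbb{Z}$, and take $\varphi$ to project onto one eigenline together with the $\R$-factor. By Matsumoto--Tsuchiya \cite{Ma92}, $\mathcal{F}$ is then the pullback.

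\emph{Main obstacle.} The hard step is the cohomological identification in necessity. I must show that the canonical class is really an invariant of homogeneity, and not something that vanishes automatically for every polycyclic $\Gamma$. Otherwise (ii) is redundant, and the theorem reduces to ``homogeneous iff polycyclic'' in dimension $5$. My first attempt will be to compute $H^2(\Gamma,\mathbb{Z})$ for $\Gamma=\mathbb{Z}^2\rtimes_A\mathbb{Z}$ via the Lyndon--Hochschild--Serre spectral sequence. This gives $H^2\cong\mathrm{coker}(A^{T}-I)\oplus\ker(\cdots)$, and I would check directly that the abelianization class lands in the torsion summand $\mathbb{Z}^2/(A-I)$, with vanishing forced by the existence of the lattice embedding.
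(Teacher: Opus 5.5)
There is a genuine gap, and it is the one you flagged yourself. Neither your reading of (ii) nor the literal one lets the necessity argument prove a non-vacuous true statement.

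The class of $1\to[\Gamma,\Gamma]\to\Gamma\to\Gamma^{\mathrm{ab}}\to 1$ lives in $H^{2}(\Gamma^{\mathrm{ab}},[\Gamma,\Gamma])$, twisted by the dilations, and it vanishes iff the extension splits. Pulling it back to $\Gamma$, as you propose, kills it identically: the pulled-back extension $\Gamma\times_{\Gamma^{\mathrm{ab}}}\Gamma\to\Gamma$ always has the diagonal section. So under your reading (ii) is empty. Moreover, in type \ref{type:IIIa} one has $[\Gamma,\Gamma]\otimes\Q\cong\Q^{2}$, so the identification with $\Q$ is not available anyway.

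Under the literal reading, necessity of (ii) is false, for the reason your own computation exhibits. The Mostow splitting you obtain is over $\Gamma\cap\R$, not over $[\Gamma,\Gamma]$. The two differ by $\mathbb{Z}[\lambda]/(\lambda-1)$, which is nonzero as soon as $|N(\lambda-1)|>1$. Since $\Gamma\subset\GA$ is torsion-free, no homomorphic section $\Gamma^{\mathrm{ab}}\to\Gamma$ can then exist. For instance, take $\lambda=2+\sqrt{3}$. The type \ref{type:IIIa} group is the holonomy of the homogeneous foliation on $(\mathrm{Sol}\times\R^{2})/(\Lambda\times\mathbb{Z}^{2})$, where $\Lambda=\mathbb{Z}[\lambda]\rtimes_{\lambda}\mathbb{Z}$ is the standard lattice of $\mathrm{Sol}$ and $\varphi$ keeps the expanding coordinate and the $\R$-factor. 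Here $\Gamma^{\mathrm{ab}}\cong\mathbb{Z}\oplus\mathbb{Z}/2$, since $N(\lambda-1)=-2$. Your step where the class ``dies \ldots\ in the rational classification'' therefore replaces the statement rather than proving it.

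Sufficiency fails at the claim that (i) excludes type \ref{type:IIIc}. Since $\lambda$ is a unit, $\mathbb{Z}[\lambda^{\pm1}]=\mathbb{Z}[\lambda]$ is finitely generated. So the translation subgroup $\mathbb{Z}[\lambda]c_{1}+\mathbb{Z}[\lambda]c_{2}$ is finitely generated and $\lambda$-stable, and $\Gamma\cong\mathbb{Z}^{k}\rtimes\mathbb{Z}$ is polycyclic. No Hirsch-length count can contradict this, and the criterion after Definition \ref{def:polycyclique} even confirms it, since all generators have unit eigenvalues. Likewise, in type \ref{type:IIIb} $\mathrm{Ad}_{\rho}$ is unipotent, so $\varepsilon$ is not an eigenvalue. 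The paper's own example ($\lambda$ the golden ratio, $c_{1}=1$, $c_{2}=\sqrt{2}$) gives $\mathbb{Z}^{4}\rtimes\mathbb{Z}$. It is even realized homogeneously on $(\R^{4}\rtimes\R)/(\mathbb{Z}^{4}\rtimes\mathbb{Z})$: let $\R$ act by a suitable real logarithm of $C\oplus C$, with $C$ the companion matrix of $x^{2}-x-1$, and let $\varphi$ be given on $\R^{4}$ by the covector $(1,\lambda,\sqrt{2},\sqrt{2}\lambda)$.

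More basically, you infer that $\F$ is homogeneous from the type of $\Gamma$ by building a model with the same $\Gamma$. This presupposes that homogeneity depends only on $\Gamma$, which fails under Definition \ref{def:homogene}. Take the $\GA$-foliation of $\mathrm{Sol}/\Lambda$ with $\lambda=\frac{3+\sqrt{5}}{2}$, so that $\Gamma^{\mathrm{ab}}\cong\mathbb{Z}$ and (i), (ii) hold in any reading. Pulling it back to $\mathrm{Sol}/\Lambda\times T^{2}$ and to $\mathrm{Sol}/\Lambda\times S^{2}$ gives the same $\Gamma$. But the second manifold has $\pi_{2}\neq0$, so it is not of the form $H/\Lambda'$.

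The paper's proof does not supply the missing link either. It only asserts that homogeneity means the developing map is a homomorphism, and that this is ``exactly'' the vanishing of the class. So the statement cannot be proved as formulated: (ii) must first be redefined, and with its natural meaning the ``only if'' direction is false.
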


\begin{proof}
Necessity of (i) is Theorem \ref{recall:polycyclique}.
Suppose (i) holds.
The foliation $\mathcal{F}$ is homogeneous if and only if the
developing map $D : \widetilde{\pi_{1}(V)} \to \mathrm{GA}(\mathbb{R})$
is a group homomorphism.
This splitting condition is exactly the triviality of the class
in $H^{2}(\Gamma, \mathbb{Z})$ of the associated extension.
For type \ref{type:IIIc}, one verifies directly that
$[\mathcal{F}] \neq 0$ when $\frac{c_1}{c_2} \notin \Q$,
since the extension does not split: if it did,
the subgroup $\langle \sigma, \rho \rangle$ would be a free abelian
group of rank $1$ (commensurate to $\mathbb{Z}$), contradicting
$\frac{c_1}{c_2} \notin \Q$.
\end{proof}

\section{Lie Foliations with Metabelian Transverse Group}
\label{sec:metabelien}

We now generalize the results of Section \ref{sec:classification}
to metabelian Lie groups $G$ of dimension $\geq 3$.

\begin{theorem}
\label{thm:scinde}
Let $G = \mathbb{R}^{n-1} \rtimes_{A} \mathbb{R}$ be a split metabelian Lie group
with $A \in \mathrm{GL}(n-1,\mathbb{R})$ having all real nonzero eigenvalues.
Every $G$-Lie foliation on a compact manifold whose holonomy group
$\Gamma$ contains an element $\gamma$ such that some eigenvalue of
$\mathrm{Ad}_{\gamma}$ is not an algebraic unit is non-homogeneous.
\end{theorem}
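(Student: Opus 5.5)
The plan is to argue by contraposition and reduce directly to Theorem \ref{recall:polycyclique} (Meigniez). Suppose $\mathcal{F}$ is a homogeneous $G$-Lie foliation in the sense of Definition \ref{def:homogene}. Then there are a Lie group $H \supset G$, a cocompact lattice $\Lambda \subset H$ and a surjective morphism $\varphi : H \to G$ with $\Gamma = \varphi(\Lambda)$. By Theorem \ref{recall:polycyclique}, $\Gamma$ is polycyclic, and every element of $\Gamma$ has all eigenvalues of its adjoint action algebraic units. This contradicts the existence of $\gamma$ in the hypothesis. The same conclusion would hold for a pullback of a homogeneous foliation, since the holonomy group is then a subgroup of the homogeneous one, and subgroups of polycyclic groups are polycyclic.

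To keep the argument from being a pure citation, I would make the eigenvalue condition explicit for $G = \mathbb{R}^{n-1} \rtimes_A \mathbb{R}$. Write $\gamma = (v,t)$ and view $A$ as the derivation of $\mathfrak{g} = \mathbb{R}^{n-1} \rtimes \mathbb{R}$. Then $\mathrm{Ad}_\gamma$ is block upper triangular with diagonal blocks $e^{tA}$ on $\mathbb{R}^{n-1}$ and $1$ on the quotient. Its eigenvalues are therefore $1$ together with $e^{t\mu_i}$, where the $\mu_i$ are the (real) eigenvalues of $A$. The reality assumption guarantees these are real positive numbers, so "algebraic unit" has an unambiguous meaning here. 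The hypothesis then says that some $e^{t\mu_i}$ is not an algebraic unit.

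Next I would check, via the criterion of Definition \ref{def:polycyclique}, that this prevents polycyclicity. Here is why. In a polycyclic $\Gamma$, the subgroup $\Gamma \cap \mathbb{R}^{n-1}$ is a finitely generated abelian group, hence some $\mathbb{Z}^k$. Conjugation by $\gamma$ preserves it, so $e^{tA}$ restricted to its real span is given by an integer matrix in a $\mathbb{Z}$-basis, invertible over $\mathbb{Z}$. Its eigenvalues are therefore algebraic units. When $\Gamma$ is Zariski-dense enough that this span is all of $\mathbb{R}^{n-1}$, every $e^{t\mu_i}$ is a unit.

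The main obstacle is the degenerate case where $\Gamma \cap \mathbb{R}^{n-1}$ does not span, so the offending eigenvalue lives off the lattice. In that case I would simply rely on Theorem \ref{recall:polycyclique} as stated: it asserts the unit property for all eigenvalues of elements of $\Gamma$, which closes the argument directly.
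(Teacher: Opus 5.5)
Your proposal is correct and is essentially the paper's argument: contraposition through Meigniez's theorem (Theorem~\ref{recall:polycyclique}). The one difference is that you invoke its eigenvalue clause directly, whereas the paper routes through the claim that a non-unit eigenvalue makes $\Gamma$ non-polycyclic, which does not hold for an arbitrary subgroup (it fails for $\Gamma=\langle\gamma\rangle\cong\mathbb{Z}$); it needs the extra input that $G/\overline{\Gamma}$ is compact, and that same input handles the degenerate case you flag without relying on the citation: compactness of $G/\overline{\Gamma}$ together with invertibility of $A$ (with real eigenvalues) forces a polycyclic $\Gamma$ to have $\Gamma\cap\mathbb{R}^{n-1}$ spanning $\mathbb{R}^{n-1}$, after which your integer-matrix argument, run through the surjection $(\Gamma\cap\mathbb{R}^{n-1})\otimes\mathbb{R}\to\mathbb{R}^{n-1}$ that intertwines the two actions, shows every $e^{t\mu_i}$ is a unit.
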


\begin{proof}
The proof is by contraposition. By Theorem \ref{recall:polycyclique},
the holonomy group of a homogeneous foliation is polycyclic.
For $G = \mathbb{R}^{n-1} \rtimes_{A} \mathbb{R}$, the holonomy group
$\Gamma \subset G$ acts on $[G,G] \cong \mathbb{R}^{n-1}$
via the adjoint representation $\mathrm{Ad}$.
If $\gamma \in \Gamma$ has an eigenvalue $\alpha$ of $\mathrm{Ad}_{\gamma}$
that is not an algebraic unit, then $\gamma$ cannot be a root of
an element of a cocompact lattice in a solvable group;
hence $\Gamma$ cannot be polycyclic.
By contraposition, the foliation is not homogeneous.
\end{proof}

For the case of non-split metabelian transverse groups, we have the
following theorem.

\begin{theorem}
\label{thm:nonscinde}
Let $G$ be a connected simply connected non-split metabelian Lie group
of dimension $3$.
There exists a non-homogeneous $G$-Lie foliation on a compact
$5$-manifold.
\end{theorem}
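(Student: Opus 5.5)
The plan is to first identify the group $G$ concretely and then build an explicit foliation whose leaves are topologically incompatible with homogeneity in the sense of Definition~\ref{def:homogene}.

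\emph{Step 1: identify $G$.} I will go through the Bianchi list (Theorem~\ref{recall:bianchi}) and check which groups are split in the sense of Definition~\ref{def:metabelien}.
\begin{itemize}
\item Type I is split, taking $A=G$.
\item Types IV, V, $\mathrm{VI}_h$ and $\mathrm{VII}_h$ are split, taking $A=\exp(\R e_3)$ as the complement to $[G,G]=\mathrm{span}\{e_1,e_2\}$.
\item The type $\mathrm{GA}(\R)\times\R$ is split, taking $A=\mathbb{R}_+^*\times\R$.
\item For the Heisenberg group $N$ (type II), $[N,N]=\exp(\R e_3)$ is the center. Any complement $A$ would map isomorphically onto $N/[N,N]\cong\R^2$. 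It would therefore contain elements $e_1+ae_3$ and $e_2+be_3$, whose bracket is $e_3\neq 0$, so $A$ cannot be abelian.
\end{itemize}
Hence $G\cong N$ is the Heisenberg group, which is the only case to treat.

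\emph{Step 2: the construction.} Let $\Lambda_0=N(\mathbb{Z})$ be the integer Heisenberg lattice, so that $M=N/\Lambda_0$ is a compact nilmanifold of dimension $3$. Let $\Sigma$ be a closed orientable surface of genus $g\ge 2$. Set $V=M\times\Sigma$, a compact $5$-manifold, and let $\mathcal F$ be the foliation by the surfaces $\{x\}\times\Sigma$. We have $\widetilde V=N\times\widetilde\Sigma$. The developing map $D(n,s)=n$ is a submersion whose fibres are the lifted leaves. It is equivariant for the holonomy morphism $h:\Lambda_0\times\pi_1(\Sigma)\to N$, $(\lambda,\mu)\mapsto\lambda$, once we let $\Lambda_0$ act on $N$ on the right. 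By the converse part of Theorem~\ref{thm:fedida}, $\mathcal F$ is an $N$-Lie foliation of codimension $3$ with holonomy group $\Gamma=\Lambda_0$.

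\emph{Step 3: non-homogeneity.} Suppose that $V=H/\Lambda$ with $\mathcal F$ induced by $K=\ker\varphi$. Since $\dim H=5$ and $\dim N=3$, we get $\dim K=2$. The leaf through $x\Lambda$ is the orbit of the identity component $K^0$ acting on $H/\Lambda$; I should check whether the action is on the left, or by $x^{-1}K^0x$ if it is on the right. This orbit is diffeomorphic to $K^0/(K^0\cap x\Lambda x^{-1})$, a connected $2$-dimensional Lie group modulo a discrete subgroup.

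A connected $2$-dimensional Lie group is $\R^2$, the cylinder, $T^2$, or a quotient of $\mathrm{GA}(\R)$. Its quotients by discrete subgroups are the plane, the cylinder, the torus, or surfaces of the form $\mathrm{GA}(\R)/\Delta$. None of these is a closed surface of genus $\ge 2$: a compact such quotient would have a nowhere-vanishing vector field induced from the Lie algebra, hence Euler characteristic $0$. But the leaves of $\mathcal F$ are copies of $\Sigma$, with $\chi(\Sigma)=2-2g<0$. This contradiction shows $\mathcal F$ is not homogeneous.

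\emph{Main obstacle.} I expect the delicate point to be Step 3's identification of the leaves of a homogeneous foliation with the orbits of $K^0$. One must use that the kernel foliation on $H/\Lambda$ has leaves that are exactly the $K^0$-orbits, with discrete isotropy, and treat the left/right convention carefully. Once this is settled, the Euler characteristic argument closes the proof.
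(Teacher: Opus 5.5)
Your proof is correct for the statement as written, where homogeneity is taken in the literal sense of Definition~\ref{def:homogene}. It follows a genuinely different route from the paper's. The paper fixes Bianchi $\mathrm{VI}_h$ and starts from a lattice in $\mathrm{VI}_{-1}\times\mathrm{VI}_{-1}$. It then perturbs the holonomy by a translation of non-unit size $\varepsilon$ and concludes through Theorem~\ref{thm:scinde}, that is, through non-polycyclic holonomy and Meigniez's theorem.

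Your Step~1 shows that this is not the situation of the hypothesis. $\mathrm{VI}_h$, like IV, V and $\mathrm{VII}_h$, is split via $A=\exp(\mathbb{R}e_3)$, and Theorem~\ref{thm:scinde} is itself a statement about split groups. The only simply connected $3$-dimensional non-split metabelian group in the sense of Definition~\ref{def:metabelien} is the Heisenberg group $N$, as you correctly identify.

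Your Steps~2--3 replace the paper's algebraic obstruction by a topological one. They also produce a closed $5$-manifold, whereas the paper uses the complement of a tubular neighbourhood. The point you flag as delicate is routine:
\begin{itemize}
\item $K^0$ is characteristic in $K$, hence normal in $H$, so $hK^0\Lambda=K^0h\Lambda$.
\item The leaf through $h\Lambda$ is therefore the $K^0$-orbit, with discrete stabiliser $K^0\cap h\Lambda h^{-1}$.
\item The fundamental vector fields of the $K^0$-action parallelise this orbit, so any compact leaf has $\chi=0$.
\end{itemize}

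What your route gives up is the stronger notion of homogeneity that the paper's narrative, and its appeal to polycyclicity, is aimed at. Your $\mathcal{F}$ is the pullback, under the projection $V\to N/\Lambda_0$, of the homogeneous foliation by points of $N/\Lambda_0$ (take $H=N$ and $\varphi=\mathrm{id}$). Its holonomy $\Lambda_0$ is polycyclic. So it is homogeneous in the Meigniez--Dathe--Ndiaye sense of ``pullback of a homogeneous foliation''.

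This cannot be improved for this $G$. Since $N$ is nilpotent, Haefliger's theorem shows that every $N$-Lie foliation on a compact manifold is such a pullback. The argument runs as follows:
\begin{itemize}
\item The holonomy group $\Gamma$ is a lattice in its Malcev completion $N_\Gamma$.
\item The inclusion $\Gamma\subset N$ extends to a surjection $N_\Gamma\to N$ with contractible kernel.
\item An equivariant lift of the developing map then yields a transverse map $V\to N_\Gamma/\Gamma$.
\end{itemize}
So the statement is true only in the literal reading, and that is exactly what you prove. You should say explicitly that this is the notion of non-homogeneity you establish.
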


\begin{proof}
Every non-split metabelian Lie group of dimension $3$ is isomorphic,
up to covering, to one of the groups in the Bianchi classification:
types Bianchi IV, V, $\mathrm{VI}_{0}$, $\mathrm{VII}_{0}$
are metabelian.
Fix Bianchi type $\mathrm{VI}_{h}$ ($h \neq 0, -1$)
with Lie algebra spanned by $\{e_{1}, e_{2}, e_{3}\}$ with
\[
  [e_{1}, e_{3}] = e_{1}, \quad
  [e_{2}, e_{3}] = h\,e_{2}, \quad
  [e_{1}, e_{2}] = 0.
\]
Let $H$ be the simply connected Lie group with Bianchi $\mathrm{VI}_{-1}$
Lie algebra (unimodular).
Consider the product $H \times H$ and its cocompact lattice
$\Lambda = \Lambda_{0} \times \Lambda_{0}$,
where $\Lambda_{0} \subset H$ is a cocompact lattice.

\medskip
\noindent\textit{Construction of the base foliation.}
On the $6$-dimensional manifold $V_{0} = (H \times H)/\Lambda$,
we construct a homogeneous $G_{h}$-foliation $\mathcal{F}_{0}$
via the surjective morphism $\varphi : H \times H \to G_{h}$
induced by projection onto the first factor.

\medskip
\noindent\textit{Non-homogeneous deformation.}
Choose $\varepsilon \in \mathbb{R}$ not an algebraic unit and modify
the holonomy morphism by adding a translation of value $\varepsilon$
in the abelian component $[G_{h}, G_{h}] \cong \mathbb{R}^{2}$.
The $5$-dimensional support manifold is obtained as the complement
of a tubular neighborhood of the image submanifold of the
developing map, by the same technique as in \cite{DN11}, Section 4.
The holonomy group of the deformed foliation contains an element
whose projection onto $[G_{h}, G_{h}]$ is $\varepsilon$,
which is not an algebraic unit;
hence the foliation is not homogeneous by
Theorem \ref{thm:scinde}.
\end{proof}

\begin{remark}
\label{rem:extension}
The non-split metabelian case is structurally different from the
split case: the obstruction to homogeneity no longer comes solely
from non-polycyclicity, but also from the non-triviality of the
extension class in $H^{2}([G,G], \mathbb{R})$.
This provides a new family of obstructions to homogeneity,
absent from the earlier literature.
\end{remark}

\section{Exotic Examples and Applications}
\label{sec:exemples}

We present two explicit examples, then a consequence for the theory
of Riemannian foliations.

\begin{example}[Explicit Type \ref{type:IIIc}]
\label{ex:iiic}
Set $\lambda = (1 + \sqrt{5})/2$ (the golden ratio),
$c_{1} = 1$, $c_{2} = \sqrt{2}$.
The group
$$
  \Gamma = \langle \tau, \sigma, \rho \rangle \;\subset\; \mathrm{GA}(\mathbb{R}),
  \qquad
  \tau : x \mapsto \lambda x, \;\;
  \sigma : x \mapsto x+1, \;\;
  \rho : x \mapsto x + \sqrt{2},
$$
is a holonomy group of type \ref{type:IIIc}.
The obstruction class $[\mathcal{F}] \in H^{2}(\Gamma, \mathbb{Z})$
is nontrivial since $\frac{c_1}{c_2} = 1/\sqrt{2} \notin \Q$.
This foliation does not belong to the family of \cite{DN11}
and constitutes a genuinely new example.
\end{example}

\begin{example}[Non-homogeneous $\mathrm{Sol}$-foliation in dimension $5$]
\label{ex:sol}
The group $\mathrm{Sol} = \mathbb{R}^{2} \rtimes_{A} \mathbb{R}$ with
$A = \mathrm{diag}(e^{t}, e^{-t})$ is a split metabelian group.
Set
$$
  \Gamma = \langle \tau, \sigma_{1}, \sigma_{2}, \rho \rangle
  \;\subset\; \mathrm{Sol},
$$
where $\tau = (t_{0}, 0)$, $\sigma_{1} = (0, (1,0))$,
$\sigma_{2} = (0, (0,1))$, $\rho = (0, (\varepsilon, 0))$
with $\varepsilon$ not an algebraic unit.
Then $\Gamma$ is the holonomy group of a non-homogeneous
$\mathrm{Sol}$-Lie foliation on a compact $5$-manifold.
This is the \emph{first known non-homogeneous $\mathrm{Sol}$-foliation
in dimension $5$}.\\
Indeed, 
the subgroup $H = \langle \tau, \sigma_{1}, \sigma_{2} \rangle$
is polycyclic and uniform in $\Gamma$.
By \cite{Me95}, $\Gamma$ is the holonomy group of a Lie foliation
$\mathcal{F}$.
The eigenvalue associated to $\rho$ in the action
$\mathrm{Ad}_{\tau}$ on $[\mathrm{Sol}, \mathrm{Sol}] \cong \mathbb{R}^{2}$ is
$e^{t_{0}} \cdot \varepsilon$, which is not an algebraic unit
since $\varepsilon$ is not.
By Theorem \ref{thm:scinde}, $\mathcal{F}$ is not homogeneous.
\end{example}

In the following theorem we give an application to Riemannian
foliations.

\begin{theorem}
\label{thm:riem}
Let $\mathcal{F}$ be a codimension-$2$ Riemannian foliation on a
compact $5$-manifold $V$.
If the associated structural Lie foliation
\textup{(}in the sense of Molino's theory \cite{Mo88}\textup{)}
is of type \ref{type:IIIb} or \ref{type:IIIc},
then $\mathcal{F}$ cannot be obtained as the pullback of a homogeneous
foliation,
and its generic leaf is not diffeomorphic to a nilpotent Lie group.
\end{theorem}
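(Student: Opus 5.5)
The plan is to reduce the statement to the classification of Theorem~\ref{thm:classif} and the polycyclicity criterion of Theorem~\ref{recall:polycyclique}, working inside Molino's structure theory \cite{Mo88}. Since $\mathcal{F}$ is Riemannian of codimension $2$ on the compact manifold $V$, Molino's theory produces a basic fibration $W \to V$ whose total space $W$ is the transverse orthonormal frame bundle. The lifted foliation $\mathcal{F}^{\#}$ on $W$ is transversally parallelizable. Restricted to the closure of a leaf, it is a $\mathfrak{g}$-Lie foliation whose structural Lie algebra $\mathfrak{g}$ is the one appearing in the hypothesis. The hypothesis that this structural foliation is of type~\ref{type:IIIb} or~\ref{type:IIIc} means that $\mathfrak{g}$ is the Lie algebra of $\mathrm{GA}(\mathbb{R})$, and that the holonomy group $\Gamma$ is one of the two non-homogeneous groups listed in Theorem~\ref{thm:classif}.

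\textbf{Step 1: no homogeneous model.} I would argue by contradiction. Suppose $\mathcal{F} = f^{*}\mathcal{F}_{H}$ for some homogeneous foliation $\mathcal{F}_{H}$ on $H/\Lambda$. Pulling back commutes with the Molino construction, because frames pull back along transversals. Hence the structural Lie foliation of $\mathcal{F}$ is itself the pullback of the structural Lie foliation of $\mathcal{F}_{H}$. The latter is homogeneous: on $H/\Lambda$ it is given by a subgroup, and its leaf closures are orbits of a closed subgroup. Its holonomy is therefore the image of a lattice, so it is polycyclic by Theorem~\ref{recall:polycyclique}. The holonomy of a pullback is a subgroup of the holonomy of the model, and subgroups of polycyclic groups are polycyclic, so $\Gamma$ would be polycyclic.
\begin{itemize}
\item For type~\ref{type:IIIb} this contradicts the fact that $\varepsilon$ is not an algebraic unit.
\item For type~\ref{type:IIIc} I would instead invoke Theorem~\ref{thm:obstruction}. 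Its condition (ii) fails, because the extension class is nonzero when $c_{1}/c_{2} \notin \mathbb{Q}$, and this class is natural under pullback.
\end{itemize}

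\textbf{Step 2: the generic leaf.} Next I would show that the generic leaf is not diffeomorphic to a nilpotent Lie group. Leaves of $\mathcal{F}$ are covered by leaves of the $\mathrm{GA}(\mathbb{R})$-Lie foliation, and those are fibers of the developing map $D$ of Theorem~\ref{thm:fedida}. Suppose the generic leaf were a nilpotent Lie group $N$ acting transitively along leaves compatibly with the holonomy. Then $\pi_{1}(V)$ would be an extension of $\Gamma$ by a lattice in $N$, and Haefliger's argument \cite{Gh88}, used in the abelian cases of Theorem~\ref{thm:classif}, would force the foliation to be a homogeneous pullback. That contradicts Step~1. A second route would be to show that the holonomy $\Gamma$ acts on the leaf closure, which is $\overline{\Gamma}$-homogeneous. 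Because $\Gamma$ contains a dilation $\tau$ with $\lambda>1$ acting nontrivially on translations, it is not virtually nilpotent, and I would then try to transfer this non-nilpotency to the leaf geometry.

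\textbf{Main obstacle.} I expect the main difficulty to be Step~2. The statement about the diffeomorphism type of the leaf is not a direct consequence of the holonomy data, since a leaf only sees $\ker h$. A nilpotent leaf does not obviously imply homogeneity unless the leaf structure is \emph{equivariant}. I would first try to establish that equivariance via the Molino commuting sheaf, whose stalk here is the abelian algebra of translations. If that fails, I would weaken the claim to leaves being non-nilpotent homogeneous spaces in the Molino sense.
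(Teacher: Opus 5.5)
\textbf{Step 2 is not proved.} The claim about the generic leaf is not established, and neither of your routes can be completed.

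Route one presupposes a transitive action of a nilpotent group $N$ along the leaves, compatible with the holonomy. The hypothesis you must refute only says the leaf is \emph{diffeomorphic} to a connected nilpotent Lie group, i.e.\ to some $\mathbb{R}^{a}\times T^{b}$ with $a+b=3$. Since $D$ is a fibration over the contractible group $\mathrm{GA}(\mathbb{R})$, its fibres are simply connected, so all this gives is $\pi_{1}(L)\cong\ker h\cong\mathbb{Z}^{b}$. An extension $1\to\mathbb{Z}^{b}\to\pi_{1}(V)\to\Gamma\to 1$ does not bring Haefliger's theorem into play. The paper uses that theorem for Types \ref{type:I}--\ref{type:II} precisely because $\Gamma$ is nilpotent there, whereas here $\Gamma$ contains $\tau$ and a nontrivial translation and is not even virtually nilpotent.

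Route two fails for the reason you give yourself: a leaf only sees $\ker h$. Weakening the claim is not proving it. The paper disposes of this step with a one-line appeal to Haefliger's theorem (leaves of a classifying nilpotent Lie foliation are contractible). That is your route one in compressed form, and it meets the same objection.

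\textbf{What actually closes it.} The density you allude to in route two settles the matter. In your own setup the structural foliation is the restriction of the lifted foliation to a leaf closure. By Molino's theorem it has dense leaves, so its holonomy group is dense in $\mathrm{GA}(\mathbb{R})$. But under $(x\mapsto ax+b)\mapsto a$, a group of type \ref{type:IIIb} or \ref{type:IIIc} maps onto the cyclic, hence discrete, subgroup $\lambda^{\mathbb{Z}}\subset\mathbb{R}_{+}^{*}$, so it is never dense. Read literally, the hypothesis can never be met and both conclusions hold vacuously. Neither your argument nor the paper's uses this.

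\textbf{Step 1.} For Type \ref{type:IIIb} you follow the paper: non-polycyclic holonomy, then Theorem~\ref{recall:polycyclique}. You are more careful than the paper in passing from $\mathcal{F}$ to its structural foliation. Strictly, what survives is only that the holonomy becomes essentially a subgroup of $\varphi(\Lambda)$, since leaf closures of a pullback need not be pullbacks of leaf closures; that suffices.

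For Type \ref{type:IIIc} you depart from the paper, whose proof claims non-polycyclicity for both types, and rightly so. When $\lambda$ is a unit, the translations in $\Gamma$ form a finitely generated $\mathbb{Z}[\lambda]$-module, so $\Gamma\cong\mathbb{Z}^{r}\rtimes\mathbb{Z}$ is polycyclic. The same computation applies to Type \ref{type:IIIb} as written whenever $\lambda$ is a unit, because $\rho$ is a translation with unipotent $\mathrm{Ad}_{\rho}$. The parameter $\varepsilon$ obstructs polycyclicity only as a dilation ratio, as $(\varepsilon,0)$ does in the group $\Gamma'$ recalled after Theorem~\ref{recall:DN11}.

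Your substitute for Type \ref{type:IIIc} does not work, for two reasons:
\begin{enumerate}
\item Theorem~\ref{thm:obstruction} is a statement about foliations on $5$-manifolds, not about the model on $H/\Lambda$.
\item Naturality runs the wrong way. A splitting of the abelianization extension of $\varphi(\Lambda)$ does not restrict to a splitting for the subgroup $\Gamma$. It only shows that the class of $\Gamma$ dies once its coefficients are pushed forward into $[\varphi(\Lambda),\varphi(\Lambda)]$.
\end{enumerate}
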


\begin{proof}
By Molino's theory \cite{Mo88}, every codimension-$q$ Riemannian
foliation is a quotient of a family of parallelizable $q$-Lie
foliations.
In codimension $2$, the structural transverse group is either
$\mathbb{R}^{2}$ (flat Riemannian foliation) or $\mathrm{GA}(\mathbb{R})$
(non-flat case, by Carri\`ere \cite{Ca88}).
In the $\mathrm{GA}(\mathbb{R})$ case, if the structural Lie foliation is of
type \ref{type:IIIb} or \ref{type:IIIc} according to our
classification, its holonomy group is not polycyclic
(Theorem \ref{thm:classif}),
hence it cannot be the pullback of a homogeneous foliation
(Theorem \ref{recall:polycyclique}).
The last assertion on leaves follows from Haefliger's theorem
\cite{Ha84}: the leaves of a classifying nilpotent Lie foliation
are contractible.
\end{proof}

\section{Conclusion}
\label{sec:conclusion}

We have established a complete classification of $\mathrm{GA}(\mathbb{R})$-Lie
foliations on compact $5$-manifolds,
exhibiting two new types \ref{type:IIIb} and \ref{type:IIIc}
of non-homogeneous foliations.
We have introduced the cohomological obstruction
$H^{2}(\Gamma, \mathbb{Z})$ as a homogeneity criterion,
complementing Meigniez's polycyclicity criterion.
Finally, we have generalized these results to dimension-$3$
metabelian transverse groups,
constructing the first examples of non-homogeneous
$\mathrm{Sol}$-foliations in dimension $5$.

\section*{Funding}
This study does not receive any fnancial support.
\section*{Conficts of Interest}
The authors declare no conficts of interest.
\section*{Data Availability Statement}
Data sharing is not applicable to this article as no datasets were generatedor analyzed during the current study

\end{document}